\def\BState{\State\hskip-\ALG@thistlm}
\theoremstyle{definition}
\newtheorem{theorem}{Theorem}[section]
\newtheorem{definition}{Definition}[section]
\newtheorem{remark}{Remark}[section]
\numberwithin{equation}{section}
\DeclareMathOperator*{\esssup}{esssup}
\journal{Journal of \LaTeX\ Templates}
\begin{document}
\begin{frontmatter}
\title{Deep Neural Network Approach to Forward-Inverse Problems}

\author[mymainaddress]{Hyeontae Jo\corref{cor1}\fnref{fn1}}
\ead{jht0116@postech.ac.kr}
\author[mymainaddress]{Hwijae Son\corref{cor1}\fnref{fn1}}
\ead{son9409@postech.ac.kr}
\fntext[fn1]{Both authors contributed equally to this work}

\author[mymainaddress]{Hyung Ju Hwang\corref{mycorrespondingauthor}}
\ead{hjhwang@postech.ac.kr}
\cortext[mycorrespondingauthor]{Corresponding author}

\author[mysecondaryaddress]{Eunheui Kim}
\ead{EunHeui.Kim@csulb.edu}

\address[mymainaddress]{Department of Mathematics, Pohang University of Science and Technology, South Korea}
\address[mysecondaryaddress]{Department of Mathematics and Statistics, California State University Long Beach, US}



\begin{keyword}
Differential equation, Approximated solution, Inverse problem, Artificial neural networks
\end{keyword}

\begin{abstract}In this paper, we construct approximated solutions of Differential Equations (DEs) using the Deep Neural Network (DNN). Furthermore, we present an architecture that includes the process of finding model parameters through experimental data, the inverse problem. That is, we provide a unified framework of DNN architecture that approximates an analytic solution and its model parameters simultaneously.  The architecture consists of a feed forward DNN with non-linear activation functions depending on DEs, automatic differentiation \cite{baydin2018automatic}, reduction of order, and gradient based optimization method. We also prove theoretically that the proposed DNN solution converges to an analytic solution in a suitable function space for fundamental DEs. Finally, we perform numerical experiments to validate the robustness of our simplistic DNN architecture for 1D transport equation, 2D heat equation, 2D wave equation, and the Lotka-Volterra system.
\end{abstract}
\allowdisplaybreaks

\end{frontmatter}

\section{Introduction}
\label{sec1}
This paper marks the first step toward a comprehensive study on deep learning architectures to solve forward-inverse problems for differential equations. Recent advances in deep learning show its capability to handle various types of model problems in many disciplines. In particular, deep learning techniques have been applied to understand data augmented differential equations. While most of such studies have been centered around heuristics and modeling prospectives, to the best of our knowledge, there is little to no theoretical analysis to confirm whether the deep learning architectures give rise to the correct solutions to the governing differential equations. An overreaching goal of this paper is to provide a comprehensive analysis of Deep Neural Networks (DNNs) to solve data-driven differential equations. This paper reports a novel architecture leveraging recent progress in deep learning techniques that solves forward-inverse problems for differential equations. The paper further includes a convergence analysis and its experimental results for our deep learning architecture. 

Forward-inverse problems (or inverse problems in short) for differential equations in this paper are related to data augmented differential equations. Namely, we consider equations of states for physical systems as governing differential equations and model parameters such as advection rates, reaction diffusion coefficients, for example, that need to be fitted by the given data. Hence numerical methods solving forward-inverse problems typically become constraint problems that require an ensemble of two steps, (1) solve the state equations, which is called the forward problems and (2) find the correct model parameters that fit the given data set. Inverse problem is an actively studied field and many numerical algorithms for the inverse problems are  robust enough to handle sufficiently large data sets, see for example \cite{arloff2018parameter, chavent2010nonlinear, levenberg1944method,  li2018particle, marquardt1963algorithm, tsilifis2016computationally, yaman2013survey} references therein.  However, such algorithms can be computationally expensive and they may be too sophisticated for non-experts in inverse problems to implement them. This calls for simplistic methods that unify two steps in solving forward-inverse problems. 

The contributions of this paper are three-fold. First, the DNN architecture presented in this paper highlights its simplistic approach to handling forward-inverse problems simultaneously. Second, a rigorous analysis of the convergence of the DNN solutions to the actual solutions for the governing problems is provided. Third, numerical experiments validate the robustness of our simple architecture. 

The paper comprises the followings. A short overview of related works on data-driven differential equations and the problem formulation are presented in the rest of Section \ref{sec1}. The methodology including the DNN architecture and loss function is described in Section \ref{sec2}. Theoretical results are given in Section \ref{sec3}. Section \ref{sec4} is devoted to the experiments done for the problem. In Section \ref{sec5}, we conclude the paper. 

\subsection{Background}
\label{Background_sec}
There are many works to utilize an Artificial Neural Network (ANN) to solve Differential Equations (DEs) in place of using well established numerical methods such as finite difference, finite element, and finite volume methods. Those finite schemes are heavily depending on mesh-grid points, and they may become a hindrance when the state equations reside in a domain with complex geometry. As such, a mesh-free approximation using clever constructions of basis functions has been introduced, see for example \cite{fasshauer1996solving, sarra2005adaptive}) and references therein. The concept of using ANN to solve DEs can be related to mesh-free approximations as one can consider Multi-Layer Perceptrons (MLPs) as an approximation to solutions to DEs. 

The ANN structure was first introduced in \cite{mcculloch1943logical}. Several studies follow to identify a class of functions that can be recognized by ANNs. The following is a brief overview and a few highlights of such studies. Cybenko \cite{cybenko1989approximation} established sufficient conditions for which a continuous function can be approximated by finite linear combinations of single hidden layer neural networks with the same univariate function. 
About the same time, Hornik {\em et al} \cite{hornik1989multilayer} showed that measurable functions can be approximated by multi-layer feedforward networks with a monotone sigmoid function as the authors called them "universal approximators". Cotter \cite{cotter1990stone} extends the result of \cite{hornik1989multilayer} to new architectures, and later Li \cite{li1996simultaneous} showed that a MLP with one hidden layer can approximate a function and its higher partial derivatives on a compact set.

The concept of using ANN to solve DEs are not new, and it has gained much attention recently.
Some of the highlights include the following. Lagaris {\em et al} \cite{lagaris1998artificial} studied to solve DEs and PDEs using an ANN with architecture including 1 single layer and 10 units and they next extended in \cite{lagaris2000neural} their results to a domain with complex geometry. Jianyu {\em et al} \cite{jianyu2003numerical} used ANN with a radial basis function as an activation function for Poisson equations. More recently, Berg {\em et al} in \cite{berg2018unified} used a DNN to solve steady (time-independent) problems for a domain with complex geometry in one and two space dimensions, and later in  \cite{berg2017neural} they studied DNN architectures to solve augmented Poisson equations (inverse problems) including three space dimensions. 

The recent work by Raissi {\em et al} \cite{raissi2019physics} can be perhaps closely related to our work in the sense that their DNN architectures (they called "continuous time models") resemble ours. We note however the aim of this paper is to establish a comprehensive study that includes a rigorous convergence analysis of DNNs to solve forward-inverse problems for DEs and PDEs. Since our convergence result is for linear equations at this point, we present our experiments for the well known linear equations as well. While our experiments cover simpler equations than those studied in \cite{raissi2019physics}, as their focuses were on architectures for "data-efficient and physics informed learning machines", we hope that our result can enhance the experiments shown in \cite{raissi2019physics}. We believe our first comprehensive results can shed lights onto further studies toward more complex and complicated systems using machine learning architectures.

\subsection{Problem formulation}
\label{Problem_sec}
We consider the equations of states as the following time dependent initial boundary value problems:
\begin{align}
L_{p}u &= 0,\text{  }t\in (0,T],\text{  }x\in \Omega,\label{ge}\\
Iu &= f,\text{  }x\in \Omega,\label{ic}\\
Bu &= g,\text{  }t\in (0,T],\text{  }x\in\partial\Omega, \label{bc}
\end{align}
where $L_{p}$ is a differential operator, $p$ is a set of model parameters, $\Omega\subset\mathbb{R}^d$ is a bounded domain (for the position of the state $u$), $\partial\Omega$ is a boundary of $\Omega$, $f(x)$ is an initial distribution, $I$ is an initial operator, $B$ is a boundary operator, and $g(t,x)$ is a boundary data. 
Next the governing equation is equipped with a set of observation data, which may be provided from actual experiments, as following:
\begin{equation}\label{obs_data}
D=\{(t_{i},x_{j},u_{ij})|i=1,2,...,n,\text{  }j=1,2,...,m\},
\end{equation}
where $u_{ij}$ denotes the output value (observation from the experiment) at position $x_{j}\in\Omega$ and time $0<t_i\leq T$ with the final time $T$. $n$ and $m$ refer the numbers of time and spatial observations respectively. Since our experiments cover ODEs and PDEs with one and two spatial dimensions, $x_j$ dependencies and the index $j$ for $x_j$ will be adjusted for each example.

We apply DNNs to solve governing equations \eqref{ge}-\eqref{bc} and the observation data \eqref{obs_data}. Our loss function includes the governing equation, observations, initial and boundary conditions. The loss function is minimized by using the standard gradient descent method. The results show that our DNN architecture can handle much lesser numbers of observations compared to known numerical solvers such as finite difference, element and volume methods. 
The results presented in this paper demonstrate that our DNN architecture is perhaps the most simplistic way to solve forward-inverse problems, yet robust enough to handle many different cases. Furthermore, we establish the convergence result of forward-inverse problems for parabolic or hyperbolic linear PDEs with Dirichlet boundary conditions. Specifically we show that the sequence of DNN solutions converges to an analytic solution in $L^{\infty}(0,T;H_0^{1}(\Omega))$ sense.

\section{Methodology}\label{sec2}
This section provides our DNN architecture and the mathematical formulation of the training procedure.

\subsection{DNN architecture}
 The DNN architecture can be written as a repeated composition of some affine transformations and some nonlinear activation functions. We denote by $u_N$ the DNN solution and assume that the architecture consists of $L + 1$ layers. The first layer takes $(t, x)$ as an input, and the last layer gives the output value $u_N(t,x)$. The $L-1$ layers between the first and the last layers are called hidden layers. We have used common nonlinear activation functions such as sigmoid, rectified linear units, and hyperbolic tangents through the DNN. Each neuron in the DNN contains a bias except for the input neurons. In two consecutive layers, the connections between neurons are written as weight matrices. Relations between $(l-1)^{th}$ and $l^{th}$ layers are defined by :

\begin{equation}\label{nn}
z^{l}_{j} = \sum_{k}^{N_{l-1}} w^{l}_{jk}\sigma_{l-1}(z^{l-1}_{k})+b^{l}_{j},
\end{equation}
where

\begin{itemize}
    \item $z^{l-1}_{k}$: $k^{th}$ neuron in $(l-1)^{th}$ layer
    \item $N_{l-1}$: the number of neurons in $(l-1)^{th}$ layer
	\item $b^{l}_{j}$: the bias of $j^{th}$ neuron in $l^{th}$ layer
	\item  $w^{l}_{jk}$: the weights between $k^{th}$ neuron in $(l-1)^{th}$ layer and $j^{th}$ neuron in $l^{th}$ layer
	\item $\sigma_{l-1}$ the activation function in $(l-1)^{th}$ layer
\end{itemize}
For convenience, we denote $z^{0}=(z^{0}_{1},...,z^{0}_{N_{0}})$ as $(t,x)$ and $z^{L}=(z^{L}_{1},...,z^{L}_{N_{L}})$ as $u_N(t,x)$, respectively. The values $N_{l-1}$, $L$, and the form of $\sigma_{l-1}$ should be chosen before training. In the training procedure, we have to calculate the optimal weights and biases $w^{l}_{jk}$, $b^{l}_{j}$ which minimize a suitable loss function. 

\subsection{Loss function}
 The training procedure of the DNN is equivalent to the optimization problem of the loss function with respect to the DNN parameters. We denote the DNN solution by $u_{N}(t,x)=u_{N}(t,x;w,b)$, where $(w,b)$ are the set of weights and biases defined in (\ref{nn}). Denote the number of grid points of time, spatial variables, initial and boundary domains by $N_t, N_x, I, B_t, B_x$ respectively. Now we define the loss function using (\ref{ge}),

\begin{equation}\label{loss_ge}
Loss_{GE}(w,b,p)=\int_{[0,T]} \int_{\Omega}(L_{p}u_{N}(t,x;w,b))^{2} dxdt \approx  \sum_{i,j=1}^{N_t,N_x}( L_{p}u_{N}(t_i,x_j;w,b))^{2},
\end{equation}
where the last approximated sum is obtained by sampling a grid point $\{(t_i,x_j)|t_i\in[0,T],x_j\in\Omega,\text{ for }i=1,...,N_t, j=1,...,N_x\}$. Note that the reason that we define the above loss function is to find the optimal weights $(w,b)$ which minimize $Loss_{GE}$. However, (\ref{loss_ge}) is not sufficient because it excludes information about initial and boundary conditions. Therefore we define two loss functions from (\ref{ic})-(\ref{bc})

\begin{align}
Loss_{IC}(w,b) &= \int_{\Omega}(Iu_{N}(0,x;w,b)-f(x))^{2}dx \approx\sum_{i=1}^{I}(u_{N}(0,x_i;w,b)-f(x_i))^2,\label{loss_ic}\\
Loss_{BC}(w,b) &= \int_{[0,T]} \int_{\partial\Omega}(Bu_{N}(t,x;w,b)-g(t,x))^{2}dSdt \approx\sum_{i,j=1}^{B_t, B_x}(u_{N}(t_i,x_j;w,b)-g(t_i,x_j)|)^{2}.\label{loss_bc}
\end{align}

Combining all loss functions (\ref{loss_ge})-(\ref{loss_bc}) is still not enough because the solution of DEs (\ref{ge})-(\ref{bc}) could differ depending on the choice of the equation parameter $p$. Due to this reason, we should make one additional loss function to calibrate $p$ using the observed data (\ref{obs_data}). 

\begin{equation}\label{loss_obs}
Loss_{Obs}(w,b) = \sum_{(t_i,x_j)\in D}|u_{ij}-u_{N}(t_i,x_j;w,b)|^2.
\end{equation}

Finally, we define the forward loss and total loss as a summation of all three, and four loss functions respectively. For the forward loss, the model parameter $p$ is considered to be fixed.
\begin{align}
&Loss_{Forward}(w,b) = Loss_{GE}(w,b) + Loss_{IC}(w,b) + Loss_{BC}(w,b), \label{loss_forward}\\
&Loss_{Total}(w,b,p) = Loss_{GE}(w,b,p) + Loss_{IC}(w,b) + Loss_{BC}(w,b) + Loss_{Obs}(w,b). \label{loss_total}
\end{align}


\begin{algorithm}
\caption{Training}\label{euclid}
\begin{algorithmic}[1]
\Procedure{train}{number of epochs}
    \For{number of epochs}
        \State sample minibatch of m samples $z^1, z^2,..., z^m$ from uniform distribution $p_{\Omega}(z)$
        \State sample minibatch of m samples $z_I^1, z_I^2,..., z_I^m$ from uniform distribution $p_{\{0\} \times\Omega }(z)$
        \State sample minibatch of m samples $z_B^1, z_B^2,..., z_B^m$ from uniform distribution $p_{\partial\Omega}(z)$
        \State sample k observation points $z_O^1, z_O^2,..., z_O^k$
        \State Find the true value $u_j = u_p(z_O^j)$ for $j=1,2,...,k$
        \State Update the neural network by descending its stochastic gradient :
        \begin{equation}\nonumber
            \nabla_{w, b} [\frac{1}{m} \sum_{i=1}^m [L_p(u_N)(z^i)^2 + (u_N(z_I^i)-f(z_I^i))^2 + (u_N(z_B^i)-g(z_B^i))^2] + \frac{1}{k}\sum_{j=1}^k (u_N(z_O^j)-u_j)^2]
        \end{equation}
    \EndFor
\EndProcedure
\end{algorithmic}
\end{algorithm}

\section{Theoretical result}\label{sec3}
This section provides a theoretical proof that there exists a sequence of weights such that the corresponding sequence of DNN solutions converges to an analytic solution on any compact subset of the domain. We focus on the DEs (\ref{ge})-(\ref{bc}) where the existence and the uniqueness of solutions are guaranteed. We establish the result in two steps. We first show that a sequence of DNN solutions converges to an analytic solution for the corresponding model parameters, called the forward problem. We next show that both the estimated parameter and the DNN solutions converge to the model parameter and the analytic solution simultaneously, called the inverse problem.

\subsection{Forward problem}
For the forward problem, we fix the model parameter $p$ and denote the analytic solution to (\ref{ge})-(\ref{bc}) by $u_p$. We also denote the DNN solution in (\ref{nn}) by $u_N$. In $u_N$, activation functions $\sigma$ are any non-polynomial functions in $C^{k}(\mathbb{R}^n)$.

Next we quote the following Definition \ref{C_hat} and Theorem \ref{global} from \cite{li1996simultaneous}

\begin{definition}\label{C_hat} Let $I_n=[0, 1]^n$ be the unit interval on $\mathbb{R}^n$, $k$ be a non-negative integer. Then we say a function $f$ is contained in $\widehat{C}^k(I_n)$ if $f\in C^k(\Omega)$ for some open set $U$ containing $I_n$
\end{definition}

\begin{theorem}\label{global}(Li, Theorem 2.1 in \cite{li1996simultaneous}) Let $f\in\widehat{C}^{k}(I_n)$. Then, given $\varepsilon>0$, we can find parameters of a neural network $u_N$, defined in (\ref{nn}) with $L=1$, so that
$$\|D^{\underline{k}}f-D^{\underline{k}}u_N\|_{L^{\infty}(I_n)}<\varepsilon$$
holds for any multi-index $\underline{k}=(k_1, k_2, ..., k_n)$, $|k_1|+|k_2|+...+|k_n|\leq k$, and $k_i$'s are non-negative integers.
\end{theorem}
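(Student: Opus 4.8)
The plan is to establish the stronger statement that single-hidden-layer networks are dense in $C^k(K)$ for every compact $K$, measured in the norm $\|g\|_{C^k(K)}=\max_{|\underline k|\le k}\|D^{\underline k}g\|_{L^\infty(K)}$, and then specialize. Since $f\in\widehat C^k(I_n)$ extends to a $C^k$ function on an open set $U\supset I_n$, I would fix a compact cube $K$ with $I_n\subset\mathrm{int}(K)\subset K\subset U$; an $\varepsilon$-approximation of $f$ in $\|\cdot\|_{C^k(K)}$ by some $u_N$ then forces $\|D^{\underline k}f-D^{\underline k}u_N\|_{L^\infty(I_n)}<\varepsilon$ for all $|\underline k|\le k$ at once, which is exactly the claim. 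A single-hidden-layer network ($L=1$) outputs precisely an element of $\mathcal M(\sigma)=\mathrm{span}\{\sigma(w\cdot x+b):w\in\mathbb R^n,\ b\in\mathbb R\}$ (constants arising from $w=0$), so it suffices to show $\mathcal M(\sigma)$ is $C^k(K)$-dense. Because polynomials are dense in $C^k(K)$ in this norm (the simultaneous polynomial approximation of a function and all its derivatives on a cube), the task reduces to placing every monomial $x^\alpha$ in the $C^k(K)$-closure of $\mathcal M(\sigma)$.

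To generate monomials I would exploit scaling in the ridge direction. For a direction $a\in\mathbb R^n$ and offset $b$, the univariate map $\lambda\mapsto\sigma(\lambda\,a\!\cdot\!x+b)$ has $m$-th derivative $(a\!\cdot\!x)^m\sigma^{(m)}(b)$ at $\lambda=0$, and its $m$-th symmetric difference quotient in $\lambda$ is a finite linear combination of elements of $\mathcal M(\sigma)$. Since $\sigma$ is not a polynomial, for each $m$ there is $b_m$ with $\sigma^{(m)}(b_m)\neq0$, so (once the convergence is justified) each ridge monomial $(a\!\cdot\!x)^m$ lies in the closure of $\mathcal M(\sigma)$. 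The powers $\{(a\!\cdot\!x)^m:a\in\mathbb R^n\}$ span the homogeneous polynomials of degree $m$ by a polarization identity, so finitely many directions recover every $x^\alpha$, and hence every polynomial, completing the chain.

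The main obstacle is precisely the convergence I just deferred: with only $\sigma\in C^k$, differentiating an $m$-th difference quotient $k$ times in $x$ formally calls for control of $\sigma^{(k+m)}$, so the naive argument cannot reach high-degree monomials in the $C^k$ norm, yet high degree is unavoidable for polynomial approximation. I would resolve this by mollification. Pick $\phi\in C_c^\infty(\mathbb R)$ so that $\sigma_\phi:=\sigma*\phi$ is non-polynomial --- possible, since otherwise $\sigma$ itself would be a polynomial --- and note $\sigma_\phi\in C^\infty$. Writing $\sigma_\phi(w\cdot x+b)=\int\sigma(w\cdot x+b-t)\phi(t)\,dt$ as a limit of Riemann sums exhibits each $\sigma_\phi$-ridge function as a $C^k(K)$-limit of elements of $\mathcal M(\sigma)$, because derivatives up to order $k$ pass under the integral when $\sigma\in C^k$. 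Running the difference-quotient argument with the smooth activation $\sigma_\phi$ then places all monomials in the $C^k(K)$-closure of $\mathcal M(\sigma_\phi)$, and hence of $\mathcal M(\sigma)$. The delicate bookkeeping I would watch most carefully is keeping every derivative error up to order $k$ below $\varepsilon$ simultaneously through the three successive limits --- Riemann sums, difference quotients, and polynomial approximation.
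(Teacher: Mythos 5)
The paper offers no proof of Theorem \ref{global} at all: it is imported as a black box from Li \cite{li1996simultaneous} (the text says ``we quote the following''), so there is no internal argument to compare yours against; what you have done is supply a self-contained proof of the cited result. Your route is the classical ridge-function argument (essentially the one in Pinkus's survey of MLP approximation theory, rather than Li's original paper): reduce to density of $\mathcal{M}(\sigma)=\mathrm{span}\{\sigma(w\cdot x+b)\}$ in the $C^k(K)$ norm, reach polynomials through the ridge monomials $(a\cdot x)^m$ obtained as $\lambda$-difference quotients of $\sigma(\lambda a\cdot x+b)$, and polarize. The outline is correct, and its main virtue is that it works verbatim under the paper's standing assumption (non-polynomial activation that is merely $C^k$) and on general compact sets, which is exactly what the paper's subsequent remark asserts without proof; in particular you correctly identify the one genuine obstruction --- the naive difference-quotient argument would require $\sigma^{(k+m)}$, which need not exist --- and patch it with the right tool, mollification, noting that ridge functions of $\sigma\ast\phi$ are $C^k(K)$-limits of Riemann sums lying in $\mathcal{M}(\sigma)$. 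Three steps you defer are standard but not free, and a complete write-up would need citations or short proofs: (a) the claim that some $\phi\in C_c^\infty$ makes $\sigma\ast\phi$ non-polynomial (``otherwise $\sigma$ would be a polynomial'') requires a Baire-category/distributional argument to rule out the degree of $\sigma\ast\phi$ growing unboundedly with $\phi$; (b) density of polynomials in $C^k(K)$ with simultaneous approximation of all derivatives up to order $k$ (e.g.\ via Bernstein polynomials on a cube); (c) the polarization fact that $\{(a\cdot x)^m : a\in\mathbb{R}^n\}$ spans the homogeneous polynomials of degree $m$. Granting these known facts, your three successive limits --- Riemann sums, difference quotients, polynomial approximation --- are each taken in the $C^k(K)$ norm, and since $\mathcal{M}(\sigma_\phi)$ lies in the $C^k(K)$-closure of $\mathcal{M}(\sigma)$, the chain closes and yields the theorem.
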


\begin{remark} Since the above result can be generalized to multi-layer architectures (for example, \cite{hornik1989multilayer}) and to a general compact set $K$ instead of $I_n$, we may assume that the architecture contains only one hidden layer ($L=1$) and the domain $\Omega$ is $I_n$ ($\Omega = I_n$).
\end{remark}

\begin{theorem}\label{forward_loss} For a non-negative integer $k$, assume that the highest order of linear operator (\ref{ge}) is $k$ and $u_p\in\widehat{C}^k(I_n)$, and the activation function $\sigma(x)$ and its ($k$-th order) derivatives are continuous and discriminatory. Then, there exists $\{m_j, w_{j}, b_j\}_{j=1}^\infty$ such that a sequence of the DNN solutions with $m_j$ nodes, denoted by $\{u_j(w_j, b_j)\}_{j=1}^{\infty}$ satisties

\begin{equation}\label{forward_loss_0}
Loss_{Forward}(w_j,b_j)\rightarrow 0\text{ as }j\rightarrow\infty
\end{equation}

\end{theorem}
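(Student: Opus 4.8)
The plan is to deduce the claim directly from Li's simultaneous approximation theorem (Theorem \ref{global}), exploiting the fact that the exact solution $u_p$ makes every residual in the forward loss vanish identically. Since by hypothesis $u_p\in\widehat{C}^k(I_n)$, for each $j\in\mathbb{N}$ I would apply Theorem \ref{global} with $\varepsilon=1/j$ to obtain a single-hidden-layer network $u_j=u_N(\cdot\,;w_j,b_j)$ with some number $m_j$ of nodes such that
$$\max_{|\underline{k}|\le k}\ \bigl\|D^{\underline{k}}u_p-D^{\underline{k}}u_j\bigr\|_{L^\infty(I_n)}<\frac{1}{j}.$$
Thus $u_j$ approximates $u_p$ together with all of its partial derivatives up to order $k$ uniformly on the (compact) space--time domain. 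The remaining task is to show that this simultaneous control of derivatives forces each of the three nonnegative terms $Loss_{GE}$, $Loss_{IC}$, and $Loss_{BC}$ to zero.

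For the governing-equation term I would use the linearity of $L_p$. Writing the order-$k$ operator as $L_p u=\sum_{|\alpha|\le k}a_\alpha D^\alpha u$ with coefficients $a_\alpha$ that are continuous, hence bounded on the compact domain, linearity gives $L_p u_j=L_p(u_j-u_p)+L_p u_p=L_p(u_j-u_p)$, since $u_p$ solves \eqref{ge} exactly ($L_p u_p=0$). The triangle inequality together with the uniform derivative bound then yields a pointwise estimate $\|L_p u_j\|_{L^\infty}\le C/j$, where $C$ collects the number of multi-indices and the suprema of the $|a_\alpha|$. Because $[0,T]\times\Omega$ has finite measure, integrating the square gives
$$Loss_{GE}(w_j,b_j)=\int_{[0,T]}\int_{\Omega}(L_p u_j)^2\,dx\,dt\le \bigl|[0,T]\times\Omega\bigr|\,\frac{C^2}{j^2}\longrightarrow 0.$$

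The initial and boundary terms are handled in the same spirit. Because $u_p$ satisfies \eqref{ic} and \eqref{bc}, we have $Iu_p(0,\cdot)=f$ and $Bu_p=g$, so that $Iu_j(0,\cdot)-f=I(u_j-u_p)(0,\cdot)$ and $Bu_j-g=B(u_j-u_p)$. Treating $I$ and $B$ as linear operators of order at most $k$ (for the Dirichlet problems under consideration $B$ is simply the trace/evaluation map, and after reduction of order $I$ is likewise of low order), the same uniform bound restricts to the initial slice $\{0\}\times\Omega$ and to $\partial\Omega$, both of finite measure, giving $Loss_{IC}(w_j,b_j)=O(1/j^2)$ and $Loss_{BC}(w_j,b_j)=O(1/j^2)$. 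Summing the three estimates yields $Loss_{Forward}(w_j,b_j)=O(1/j^2)\to 0$, which is exactly \eqref{forward_loss_0}.

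The single genuinely essential step is the passage from uniform approximation of $u_p$ to control of the residual $L_p u_j$, and this is precisely why the hypotheses are posed as they are: requiring the highest order of $L_p$ to be $k$, the solution to lie in $\widehat{C}^k(I_n)$, and the activation together with its derivatives up to order $k$ to be continuous and discriminatory is what guarantees that Theorem \ref{global} delivers simultaneous approximation of exactly those derivatives that appear in $L_p$ (and in $I$ and $B$). The only remaining points requiring care are bookkeeping ones---boundedness of the coefficients $a_\alpha$ on the compact domain and finiteness of the measures of the integration regions---both of which hold automatically here. I do not anticipate a substantive obstacle beyond verifying that $I$ and $B$ are indeed of order at most $k$, so that the quoted simultaneous-approximation estimate covers every derivative they involve.
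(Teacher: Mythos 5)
Your proposal is correct and follows essentially the same route as the paper's own proof: invoke Li's simultaneous approximation theorem (Theorem \ref{global}) to get uniform closeness of $u_j$ to $u_p$ together with all derivatives up to order $k$, then use linearity of $L_p$, $I$, $B$ (and the fact that $u_p$ solves \eqref{ge}--\eqref{bc} exactly) to bound the residuals, and integrate their squares over the finite-measure domains. The paper states this argument very tersely ("by integrating $|L_p u_j|^2$, $|Iu_j-f|^2$, $|Bu_j-g|^2$ \ldots we obtain the desired result"); your write-up simply makes explicit the bookkeeping---boundedness of the coefficients and the $O(1/j^2)$ rate---that the paper leaves implicit.
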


\begin{proof}
Let $\epsilon > 0$ be given. By Theorem \ref{global}, there exists a neural network $u_{j}(x) = \sum_{i=1}^{m_j} w_i^1 \sigma({w_i^2}x + b_i)$ such that $\|D^{\underline{k}}u_p - D^{\underline{k}}u_{j}\|_{\infty} < \epsilon$, where ${\underline{k}}$ is a non-negative multi-index up to differentiability of $u_p$. By integrating $|L_pu_{j}|^2\text{, } |Iu_{j}-f|^2\text{, } |Bu_{j} - g|^2 $ over $[0,T] \times \Omega\text{, } \Omega\text{, } [0,T] \times \partial\Omega$ respectively, we obtain the desired result.
\end{proof}

\begin{remark} The assumption $u_p\in\widehat{C}^k(I_n)$ in Theorem \ref{forward_loss} is a strong condition. Since we can also approximate it by a sequence of compactly supported smooth functions in $I_n$, we can extend the assumption to a general Sobolev space.
\end{remark}

The Theorem \ref{forward_loss} states that we can always find parameters of a DNN architecture which can reduce $Loss_{Forward}$ if the DE has a smooth analytic solution. However, since we can not directly use information of an analytic solution, we next show that the DNN architecture equipped with parameters which minimize $Loss_{Forward}$ converges to an analytic solution in Theorem \ref{theorem_forward}.

\begin{theorem}\label{theorem_forward} Let $L_{p}=\partial_{t}+L$ in (\ref{ge}) be a second order parabolic operator and $Bu=0$ in (\ref{bc}) be a Dirichlet boundary condition. Also we define the DNN solution $u_{j}=u_{j}(t,x;w_j,b_j)$ with $m_j$ nodes and the corresponding loss $Loss_{Forward}(w,b)$. Then,  $Loss_{Forward}(w,b)\rightarrow 0$ implies
\begin{equation}
u_{j}(t,x;w_j,b_j) \rightarrow u_p \text{ in } L^{\infty}([0,T];H^1_0(\Omega)),
\end{equation}
where $u_p$ is a solution to  (\ref{ge})-(\ref{bc})
\end{theorem}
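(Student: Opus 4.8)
The plan is to prove the statement by a parabolic energy (a priori) estimate for the error $e_j := u_j - u_p$. First I would record what the hypothesis gives. Writing $F_j := L_p u_j$, $r_j := u_j(0,\cdot)-f$, and $h_j := u_j\big|_{(0,T]\times\partial\Omega}$, the three summands of $Loss_{Forward}$ are exactly $\|F_j\|_{L^2((0,T)\times\Omega)}^2$, $\|r_j\|_{L^2(\Omega)}^2$, and $\|h_j\|_{L^2((0,T)\times\partial\Omega)}^2$; hence $Loss_{Forward}(w_j,b_j)\to 0$ forces all three quantities to $0$. Since $u_p$ solves $\partial_t u_p + Lu_p = 0$, $u_p(0,\cdot)=f$, $u_p|_{\partial\Omega}=0$, subtracting shows that $e_j$ solves the linear parabolic problem $\partial_t e_j + L e_j = F_j$ with initial datum $e_j(0,\cdot)=r_j$ and boundary trace $e_j|_{\partial\Omega}=h_j$; that is, an inhomogeneous heat-type equation all of whose data are small in $L^2$.

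Next I would carry out the energy estimate. Because $e_j$ does not vanish on $\partial\Omega$, I would reduce to the homogeneous-boundary case by choosing a lift $\phi_j$ with $\phi_j|_{\partial\Omega}=h_j$ and setting $w_j := e_j - \phi_j$, so that $w_j(t,\cdot)\in H^1_0(\Omega)$ for a.e. $t$ and $w_j$ solves the same equation with right-hand side $G_j := F_j - \partial_t\phi_j - L\phi_j$, zero boundary trace, and initial datum $r_j-\phi_j(0,\cdot)$. Testing against $w_j$ and using Gårding's inequality $\langle Lv,v\rangle \ge \beta\|v\|_{H^1_0(\Omega)}^2 - \gamma\|v\|_{L^2(\Omega)}^2$ (uniform ellipticity), then integrating in time and applying Grönwall, yields
\[
\|w_j\|_{L^\infty(0,T;L^2(\Omega))}^2 + \|w_j\|_{L^2(0,T;H^1_0(\Omega))}^2 \lesssim \|w_j(0,\cdot)\|_{L^2(\Omega)}^2 + \|G_j\|_{L^2((0,T)\times\Omega)}^2 .
\]
To upgrade this to the claimed $L^\infty(0,T;H^1_0)$ control I would run the higher-order estimate, testing instead against $\partial_t w_j$: for $L$ with symmetric principal part this makes $\tfrac12\frac{d}{dt}B[w_j,w_j]$ appear, and Young's inequality on the pairing $\langle G_j,\partial_t w_j\rangle$ gives, after integration,
\[
\sup_{t\in[0,T]} B[w_j(t),w_j(t)] \lesssim B[w_j(0),w_j(0)] + \|G_j\|_{L^2((0,T)\times\Omega)}^2 ,
\]
so that $\|w_j\|_{L^\infty(0,T;H^1_0(\Omega))}\to 0$ provided the right-hand side $\to 0$. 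Writing $e_j = w_j + \phi_j$ and adding the contribution of $\phi_j$ then gives $e_j\to 0$, i.e. $u_j\to u_p$ in $L^\infty(0,T;H^1_0(\Omega))$.

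I expect the construction and control of the boundary lift $\phi_j$ to be the main obstacle. The hypothesis supplies only $\|h_j\|_{L^2((0,T)\times\partial\Omega)}\to 0$, whereas a lift with controlled $H^1$ norm (and, because of the $L\phi_j$ term, controlled $H^2$ norm) requires smallness of $h_j$ in the trace spaces $H^{1/2}(\partial\Omega)$ and $H^{3/2}(\partial\Omega)$, which an $L^2$ bound does not by itself deliver. Closing this gap needs additional input: either an elliptic-regularity/interpolation argument exploiting that each $u_j$ is smooth, so that its boundary trace is controlled in a higher norm, or a strengthening of $Loss_{BC}$ to a Sobolev boundary penalty. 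A second, milder issue is that the sharpened estimate needs $\|e_j(0,\cdot)\|_{H^1_0}\to 0$, while $Loss_{IC}$ controls only $\|r_j\|_{L^2}$; here I would either invoke parabolic smoothing away from $t=0$ or, as in the remark following Theorem \ref{forward_loss}, use that the sequence furnished by Theorem \ref{global} converges in $C^k$ and hence controls the initial error in $H^1_0$ as well.
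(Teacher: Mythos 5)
Your overall strategy --- subtracting to view $e_j = u_j - u_p$ as the solution of an inhomogeneous parabolic problem with small data and applying a parabolic energy estimate --- is exactly the paper's strategy (the paper invokes the estimate of Theorem 5, Chapter 7 of Evans, which delivers precisely the $\esssup_{t}\|\cdot\|_{H^1_0(\Omega)}$ control you derive by testing with $\partial_t w_j$). The two arguments genuinely diverge only in how the boundary mismatch is handled, and there your route has the gap you yourself flag: you keep the DNN's nonzero boundary trace $h_j$ and try to remove it with a lift $\phi_j$, but $Loss_{BC}$ only controls $\|h_j\|_{L^2((0,T)\times\partial\Omega)}$, which is too weak to produce a lift small in the norms the energy estimate consumes ($H^{1/2}$ for the gradient terms, and effectively $H^{3/2}$ because of $L\phi_j$). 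The paper does not solve this trace problem --- it avoids it by changing the objects being estimated: its first step modifies the architecture, replacing the trial functions by $b(x)\sigma(t,x)$ with $b$ a fixed smooth function vanishing on $\partial\Omega$, so every DNN solution satisfies the homogeneous Dirichlet condition exactly, the boundary residual is identically zero, and no lifting is needed. With that device your $\phi_j$ is simply $0$ and your estimate closes immediately; without it, your argument (as you concede) cannot be completed from the stated hypotheses.

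Your second concern --- that the $L^\infty(0,T;H^1_0(\Omega))$ bound requires $\|e_j(0,\cdot)\|_{H^1_0(\Omega)}\to 0$ while $Loss_{IC}$ gives only $L^2$ smallness --- is not resolved by the paper either: the Evans estimate it cites carries $\|\eta_m\|_{H^1_0(\Omega)}$ on its right-hand side, and the paper simply asserts that this right-hand side is ``equivalent to'' $Loss_{Forward}$, which is not literally true for exactly the reason you give. On this point your proposal is more careful than the published proof, and your suggested repairs (parabolic smoothing away from $t=0$, or using that the approximating sequence furnished by Theorem \ref{global} converges in $C^k$ and hence controls the initial error in $H^1$) are the kind of supplement the paper's own argument also needs.
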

\begin{proof}
First we assume that the activation function satisfies the Dirichlet boundary condition by replacing it with $b(x)\sigma(t,x)$, where $b(x)$ is an arbitrary smooth function that satisfies $b=0$ on $\partial \Omega$. By evaluating $u_p-u_{j}$ in (\ref{ge})-(\ref{bc}), we have the following
\begin{align}
\partial_{t}(u_p-u_{j})+L(u_p-u_{j}) &= \varepsilon_{m}(t,x),\text{  }t\in (0,T],\text{  }x\in \Omega,\nonumber\\
I(u_p-u_{j}) &= \eta_{m}(x),\text{  }x\in \Omega,\nonumber\\
B(u_p-u_{j}) &= 0,\text{  }t\in (0,T],\text{  }x\in\partial\Omega\nonumber
\end{align}
Then the energy estimates for the second order parabolic equation (see Theorem 5, Chapter 7 in  \cite{evans10}) are applied to obtain that
\begin{align}
\esssup_{0\leq t\leq T}\|u_p-u_{j}(\cdot,t)\|_{H^{1}_{0}(\Omega)}+\|u_p-u_{j}\|_{L^2([0,T];H^1_{0}(\Omega))}+\|\partial_{t}\left(u_p-u_{j}\right)\|_{L^2([0,T];L^2(\Omega))}\label{energy_parabolic}\\ \leq C\left(\|\varepsilon_{m}\|_{L^2([0,T];L^2(\Omega))}+\|\eta_{m}\|_{H^1_0(\Omega)}\right),\nonumber
\end{align}
where the constant $C$ in (\ref{energy_parabolic}) depends only on $\Omega, T$ and the coefficients of $L$. Note that the right hand side in (\ref{energy_parabolic}) is equivalent to $Loss_{Forward}(w,b)$. This shows that the sequence of DNN solutions converges to the analytic solution when  $Loss_{Forward}(w,b)\rightarrow 0$.
\end{proof}
\begin{remark}
The convergence result also holds when $L_{p}=\partial_{tt}+L$ is a second order hyperbolic operator.
\end{remark}
\subsection{Inverse problem}

\begin{definition}
Let $P$ be the set of all possible model parameters. Define $S := \{u_p\text{ }|\text{ }p \in P\}$ be the set of solutions corresponding to each model parameter $p \in P$.
\end{definition}

\begin{definition}
We say the observation set $D_p$ is $clear$ if for any $u_p, u_q \in S$, $u_p|_D = u_q|_D $ if and only if $p=q$
\end{definition}

\begin{theorem} Let $Loss_{total}(w,b,p)$ be the total loss defined in (\ref{loss_total}) and let the observation set $D_{p}$ with $p \in P$ be given and clear. We assume that for given $\{\epsilon_j\}_{j=1}^{\infty}$ with $\epsilon_j \rightarrow 0$, there exists $(m_j, w_j, b_j, p_j)$ such that $Loss_{Total}(w_j,b_j,p_j) < \epsilon_j$, and the parameter set $\{p_j\}$ is contained in  $P$, then 
\begin{equation}
u_{j}(w_j, b_j) \rightarrow u_{p} \text{ a.e. and } p_j \rightarrow p \text{ as } j \rightarrow \infty
\end{equation}
\end{theorem}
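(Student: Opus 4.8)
The plan is to decouple the total loss into its four nonnegative summands and then run two independent convergence arguments---one for the state via Theorem~\ref{theorem_forward}, one for the parameter via the clearness hypothesis---and finally glue them together on the observation set $D$.

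First I would note that $Loss_{Total}(w_j,b_j,p_j)<\epsilon_j\to 0$ and that each of $Loss_{GE}$, $Loss_{IC}$, $Loss_{BC}$, $Loss_{Obs}$ is nonnegative, so all four tend to $0$ separately. In particular the forward part $Loss_{GE}(w_j,b_j,p_j)+Loss_{IC}+Loss_{BC}$ tends to zero, but it is assembled from the operator $L_{p_j}$; hence the convergence it yields is toward the analytic solution $u_{p_j}$ associated with the \emph{running} parameter $p_j$, not toward the target $u_p$. Applying the energy estimate (\ref{energy_parabolic}) of Theorem~\ref{theorem_forward} with the fixed parameter $p_j$ gives $\|u_j-u_{p_j}\|_{L^{\infty}([0,T];H^1_0(\Omega))}\to 0$, provided the estimate's constant $C=C(p_j)$ stays uniformly bounded along the sequence.

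Next I would bring in the data. Since the observations $u_{ij}$ in (\ref{obs_data}) are samples of the true solution $u_p$, the vanishing of $Loss_{Obs}$ says $u_j|_D\to u_p|_D$ on the finite set $D$. Combined with the uniform (hence pointwise on $D$) convergence $u_j\to u_{p_j}$ from the previous step, this forces $u_{p_j}|_D\to u_p|_D$. To identify the parameter I would pass to a subsequence: assuming $P$ is (pre)compact, extract $p_{j_k}\to p^{*}$, use continuity of the solution map $p\mapsto u_p$ and of restriction to $D$ to get $u_{p^{*}}|_D=u_p|_D$, and invoke clearness of $D_p$ to conclude $p^{*}=p$. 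As every convergent subsequence of $\{p_j\}$ then has the same limit $p$, the full sequence satisfies $p_j\to p$; continuity again gives $u_{p_j}\to u_p$, and with $\|u_j-u_{p_j}\|\to 0$ we obtain $u_j\to u_p$, from which a.e. convergence follows (along a subsequence if one only has $L^2$ control).

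The hard part will be the two ingredients that the statement leaves implicit: the uniform boundedness of the parabolic constant $C(p_j)$ and, more importantly, the continuity of the solution operator $p\mapsto u_p$ in a topology compatible with evaluation on $D$. Both require quantitative control of how the coefficients of $L_{p_j}$ enter the energy estimate, together with some compactness of the parameter space $P$. The clearness hypothesis is precisely the device that upgrades the limiting data match $u_{p^{*}}|_D=u_p|_D$ into the identification $p^{*}=p$, so the entire argument rests on coupling that injectivity with a continuity-plus-compactness package on $P$.
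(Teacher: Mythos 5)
Your proposal follows the same skeleton as the paper's proof: split $Loss_{Total}$ into the forward part and the observation part, apply Theorem~\ref{theorem_forward} with the \emph{running} parameter $p_j$ to get $u_j \to u_{p_j}$, and then use the vanishing observation loss together with clearness of $D_p$ to force $p_j \to p$. The difference is one of rigor rather than route, and it cuts in your favor. The paper's proof disposes of the parameter identification in one line --- ``$Loss_{Obs,p_m}\rightarrow 0$ implies $p_m \rightarrow p$ by definition of $D_p$'' --- but clearness is a pure injectivity statement ($u_p|_D = u_q|_D$ iff $p=q$) and by itself carries no quantitative or limiting content: a sequence of parameters whose solutions nearly match the data on $D$ need not converge to $p$ without something extra. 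The compactness-of-$P$, continuity of the solution map $p\mapsto u_p$, and subsequence-extraction package you assemble is precisely the machinery needed to turn injectivity on $D$ into the asserted convergence $p_j\to p$, and your observation that the energy constant $C=C(p_j)$ in (\ref{energy_parabolic}) must be uniformly bounded along the sequence is likewise a genuine hypothesis the paper uses silently (it applies Theorem~\ref{theorem_forward} for each fixed $p_m$ without tracking how the constant depends on the coefficients of $L_{p_m}$). So your proof is not merely consistent with the paper's; it makes explicit the two unstated assumptions on which the paper's two-line argument actually rests. The only caveat is that, as you note, these ingredients are not in the theorem's hypotheses, so strictly speaking both your argument and the paper's prove a version of the theorem with strengthened assumptions.
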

\begin{proof} We first divide the total loss into 
$Loss_{Total}=Loss_{Forward, p_{j}} + Loss_{Obs, p_{j}}$. 
For $m$ fixed, we set  $u_{N_{m,k}}(w_{m,k},b_{m,k})$ as a DNN solution with $k\geq m$ nodes, where $(w_{m,k}, b_{m,k})$ is a minimizer of $Loss_{Forward}$ defined in Definition \ref{forward_loss_0}. Then,  Theorem \ref{theorem_forward} implies $u_{N_{m,k}}\rightarrow u_{p_m}$ as $k\rightarrow\infty$. Also, $Loss_{Obs, p_{m}}\rightarrow 0$ implies $p_m \rightarrow p $ by definition of $D_p$

\end{proof}

\section{Experiments}\label{sec4}
In this section, we provide experimental results based on several differential equations including 1D transport equation, 2D heat equation, 2D wave equation, and the Lotka-Volterra system. For each equation, we have calculated an analytic (if possible) or numerical solution with fixed model parameters in order to generate a small amount of true solution points which will be regarded as the observation points. We apply our DNN model to find an approximated solution and the optimal equation parameter at the same time. In this step, we have used a neural network with variable depth and width, and ReLU activations. We used the Adam optimizer \cite{kingma2014adam, reddi2019convergence} with $(\beta1, \beta2) = (0.9, 0.999)$ in order to find the minimizer $w, b,$ and $p$ defined in (\ref{loss_total}). Also, for higher order derivatives in (\ref{ge}) we have applied the reduction of order technique to express it as a system of differential equations. This step dramatically reduces the computational cost for calculating (\ref{loss_ge}). For example, the second-order PDE $u_{xx}=f$ can be replaced by $v_x=f$ together with the equation $v=u_x$. That is, we derive two first-order PDEs $v_x=f, u_x=v$ from one second-order PDE $u_{xx}=f$, then the output layer of DNN should be changed into $(u,v)$. This method is applied to 2D Heat and Wave equations. Also, different types of activation functions are used depending on the behavioral characteristics of the governing equations. Finally, we provide two differences between 1) actual and model output values and 2) actual and calculated model parameters. Observation points were calculated from analytic (transport), series (heat, wave), numerical (Lotka-Volterra) solution and sampled randomly among them.

In the rest of this section, we present the experimental results. For each figure, top left, top right figures show our neural network solution and the analytic solution respectively. Bottom left shows the absolute error between the neural network solution and the analytic solution. Bottom right figure shows the convergence of estimated parameters to the real parameters. We have implemented our method by using Pytorch \cite{paszke2017automatic}, which is one of the most famous machine learning library. We first present the detailed experimental settings. Table \ref{grid_info} and \ref{nn_architecture} show the summarized information of the number of grid points and DNN architectures respectively.

\begin{table}
\centering
\caption {Information of grid and observation points}
\begin{tabular}{|c|c|c|c|}\hline
\multirow{2}{*}{ } & \multicolumn{3}{c|}{Data Generation}                        
\\ \cline{2-4}  & Grid Range & Number of Grid Points & Number of Observations \\ \hline
1D Transport      &     $(t,x) \in [0,1]\times[0,1]$       &   $17 \times 100$                  &        17            \\ \hline
2D Heat           &    $(t,x,y) \in [0,1]\times[0,1]\times[0,1]$        &         $100 \times 100 \times 100$              &       13                 \\ \hline
2D Wave           &     $(t,x,y) \in [0,1]\times[0,1]\times[0,1]$       &           $100 \times 100 \times 100$           &       61                 \\ \hline
Lotka-Volterra    &    $t \in [0,100]$        &            20,000           &       40                 \\ \hline
\end{tabular}
\label{grid_info}
\end{table}

\begin{table}
\centering
\caption {Neural network architecture}
\begin{tabular}{|c|c|c|c|}
\hline
\multirow{2}{*}{ } & \multicolumn{3}{c|}{Neural Network Architecture}             
\\ \cline{2-4} & Fully Connected Layers & Activation Functions & Learning Rate \\ \hline
1D Transport      & 2(input)-128-256-128-1(output)      &             ReLU         &      $10^{-5}$         \\ \hline
2D Heat           & 3(input)-128-128-1(output)            &            Sin, Sigmoid          &      $10^{-5}$         \\ \hline
2D Wave           & 3(input)-128-256-128-1(output)                       &        Sin, Tanh              &     $10^{-5}$             \\ \hline
Lotka-Volterra    & 1(input)-64-64-2(output)                       &         Sin             &    $10^{-4}$           \\ \hline
\end{tabular}
\label{nn_architecture}
\end{table}

\subsection{1D Transport equation} 1D transport equation consists of
\begin{align}
\label{1D_transport}
&\partial_{t}u+a\partial_{x}u = 0,\\
&u(0,x) =
  \begin{cases}
    \sin^4{0.25*\pi(x-0.1)}    & \quad \text{if } 0.1\leq x\leq0.5\\
    0 & \quad \text{otherwise}
  \end{cases}\nonumber,
\end{align}
where $a=\pi/10 (\simeq 0.314...)$. We have generated the observations from the analytic solution, by method of characteristics, of (\ref{1D_transport}).

\begin{center}
\begin{figure}[H]
    \centering
    \includegraphics[height=0.5\textwidth, width=\textwidth]{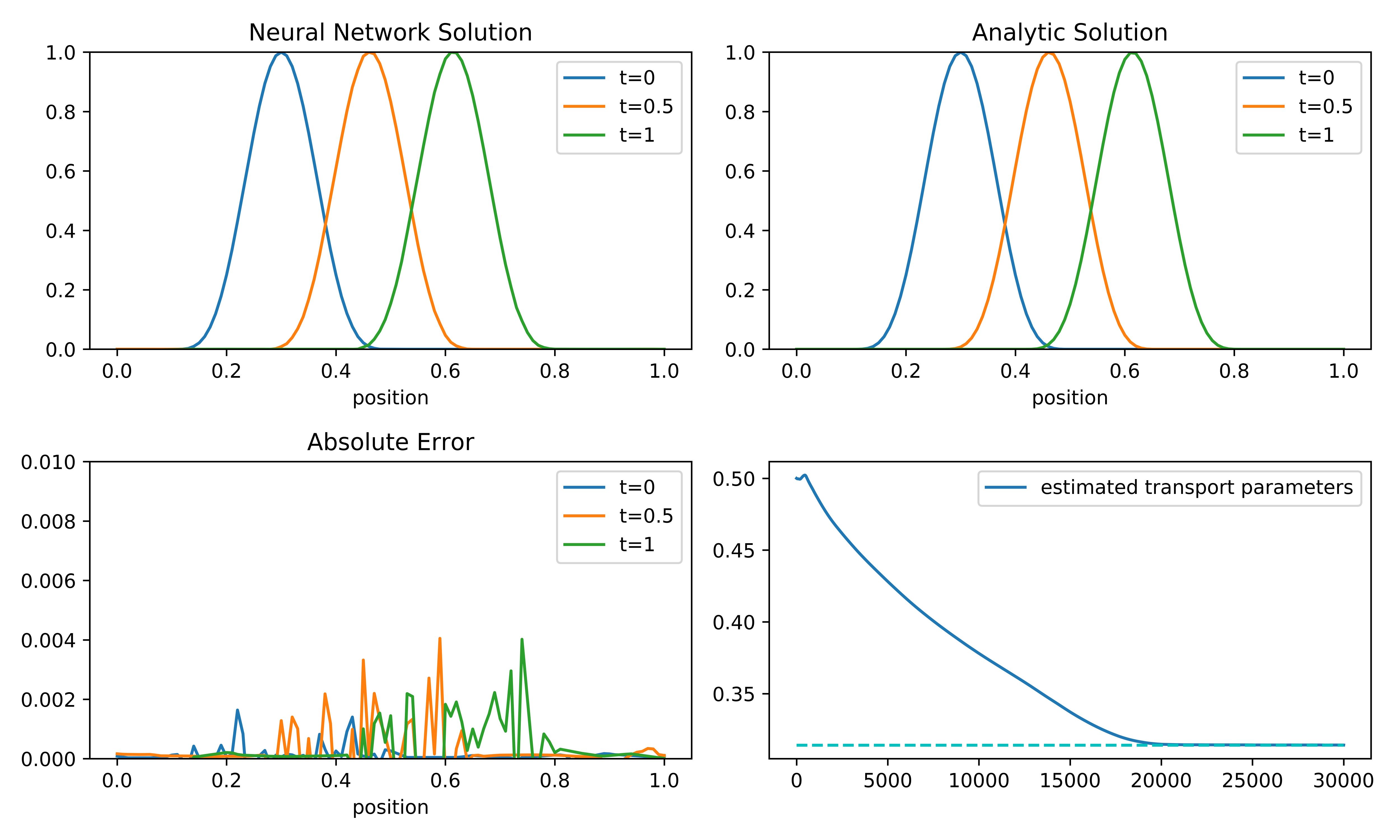}
    \caption{Experimental result for 1D transport equation}
    \label{fig:transport}
\end{figure}
\end{center}

\subsection{2D Heat equation}
\begin{align}
\label{2D_heat}
&\partial_{t}u = a^2 \left(\partial_{xx}u + \partial_{yy}u\right),\\ \nonumber
&u(t,0,y) = u(t,1,y) = 0,\\ \nonumber
&u(t,x,0) = u(t,x,1) = 0,
\end{align}
where $a=1$. We have generated the observations from the partial sum of the series solution, by separation of variables, of (\ref{2D_heat})

\begin{figure}[H]
    \centering
    \includegraphics[ height=0.5\textwidth, width=\textwidth]{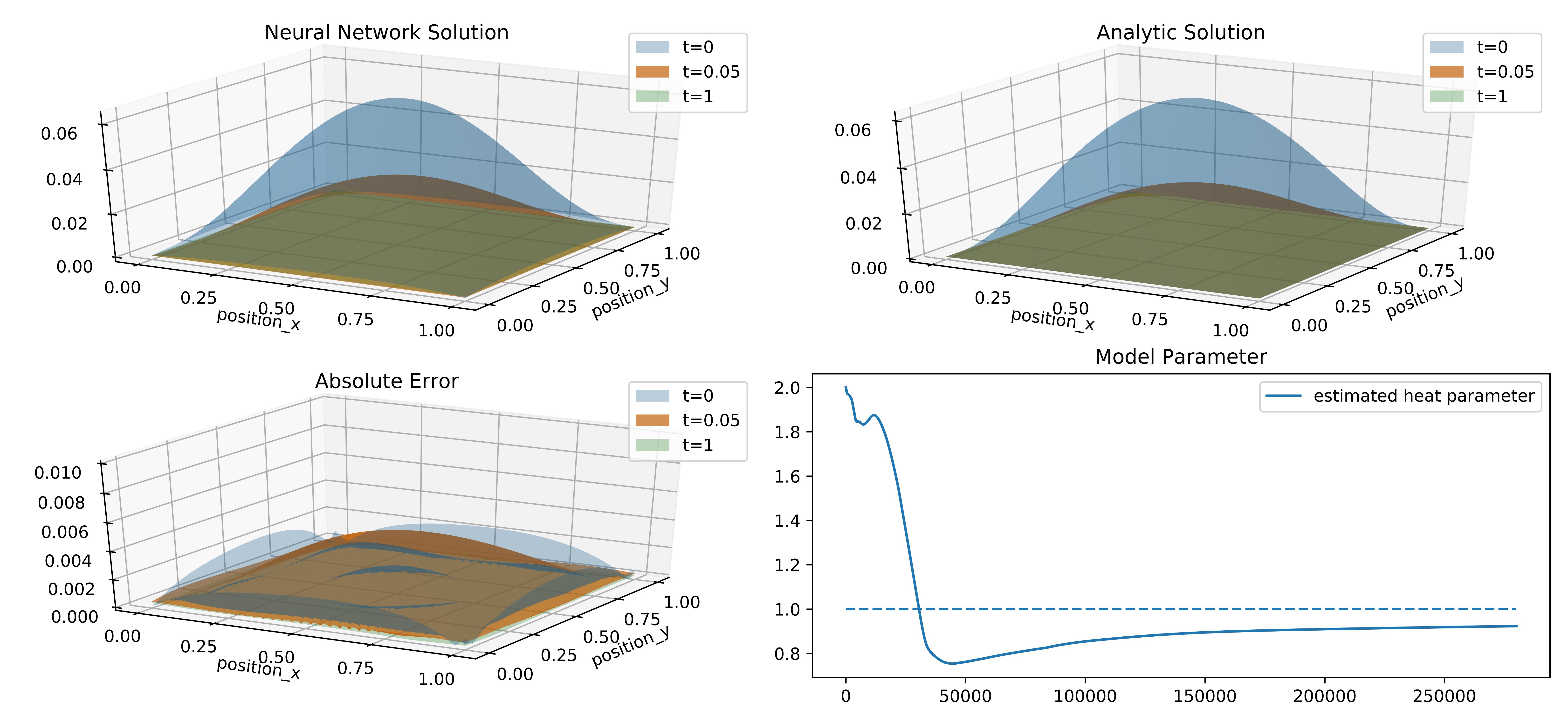}
    \caption{Experimental result for 2D heat equation}
    \label{fig:heat}
\end{figure}

\begin{figure}[H]
    \centering
    \includegraphics[ height=0.5\textwidth, width=\textwidth]{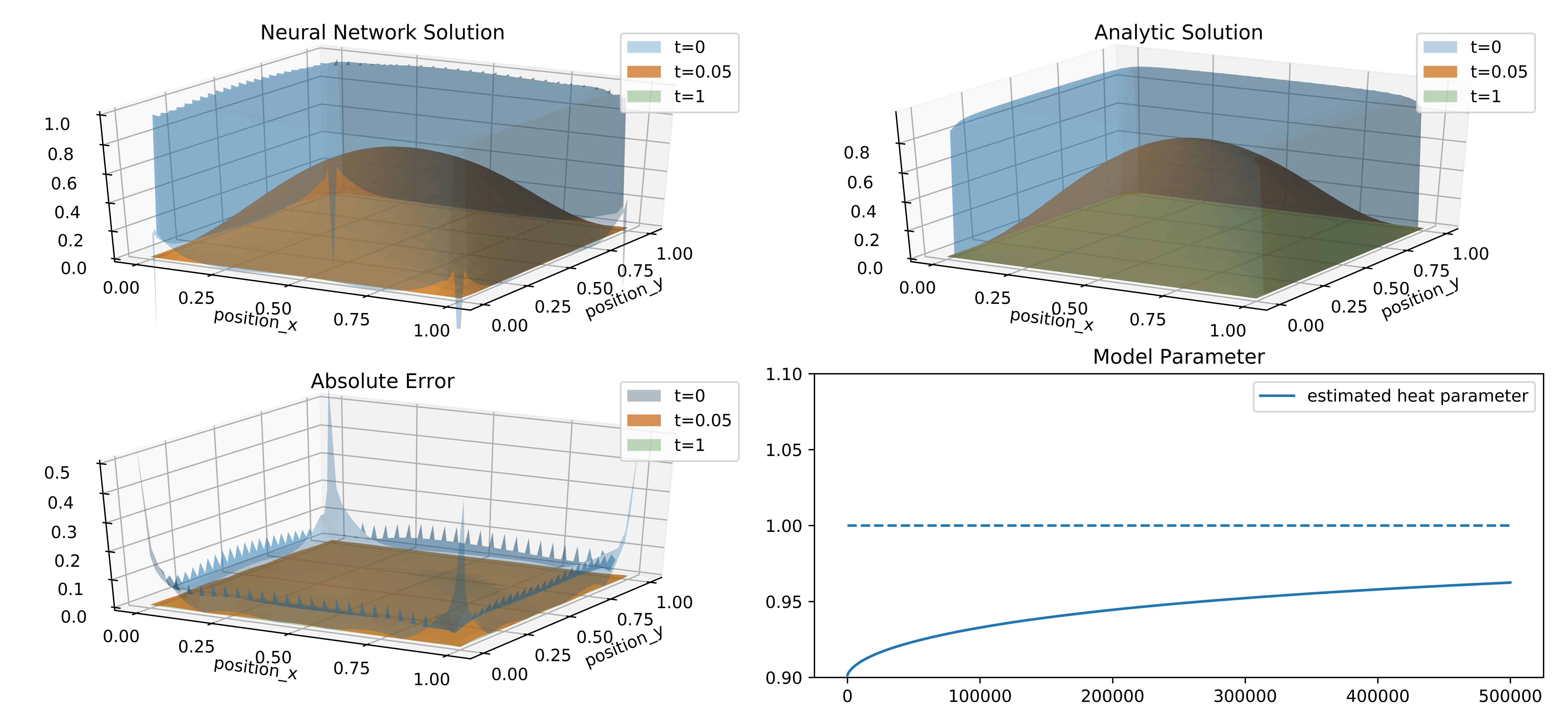}
    \caption{Experimental result for 2D heat equation}
    \label{fig:heat}
\end{figure}

\subsection{2D Wave equation}
\begin{align}
\label{2D_wave}
&\partial_{tt}u = a^2 \left(\partial_{xx}u + \partial_{yy}u\right),\\ \nonumber
&u(0,x) = xy(1-x)(1-y),\\ \nonumber
&\partial_{t}u(0,x,y) = 0,\\ \nonumber
&u(t,0,y) = u(t,1,y) = 0,\\ \nonumber
&u(t,x,0) = u(t,x,1) = 0,
\end{align}
where $a=1$. We have generated the observations from the partial sum of the series solution, by separation of variables, of (\ref{2D_wave})

\begin{figure}[H]
    \centering
    \includegraphics[ height=0.5\textwidth, width=\textwidth]{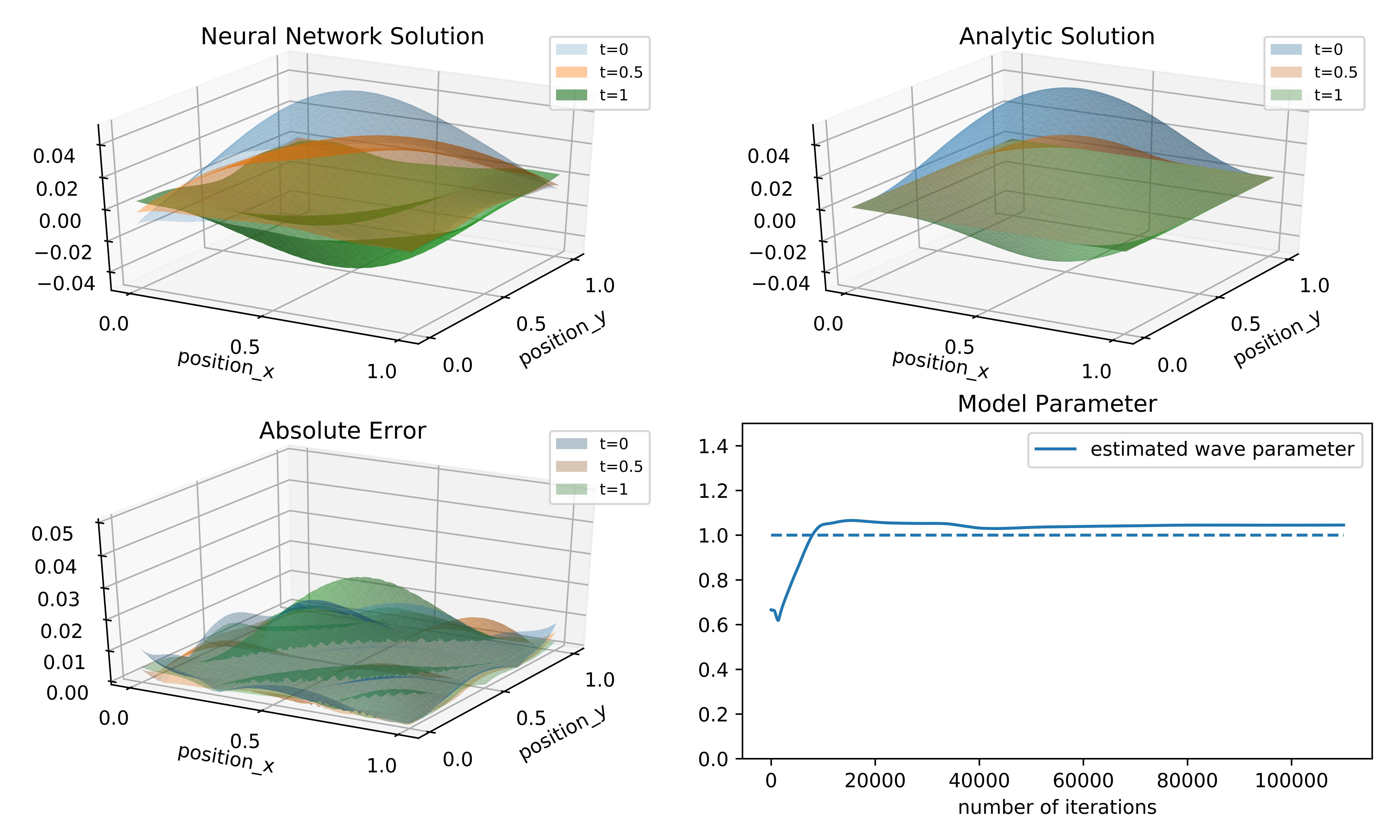}
    \caption{Experimental result for 2D wave equation}
    \label{fig:wave}
\end{figure}

\subsection{Lotka-Volterra system}

\begin{align}
\label{Lotka-Volterra}
&u'(t) = \alpha u -\beta uv,\\ \nonumber
&v'(t) = \delta uv - \gamma v,\\ \nonumber
&u(0) = 1, v(0) = 1,\\ \nonumber
\end{align}
where $\alpha=1, \beta=0.4, \delta=0.4, \gamma=0.1$. We have generated the observations from a numerical solution by the Runge-Kutta method of (\ref{Lotka-Volterra}). We used the $sin$ function as the activation function for Lotka-Volterra system. Considering the periodic nature of the solution, the periodic activation function is a natural choice.  

\begin{figure}[H]
    \centering
    \includegraphics[height=0.5\textwidth, width=\textwidth]{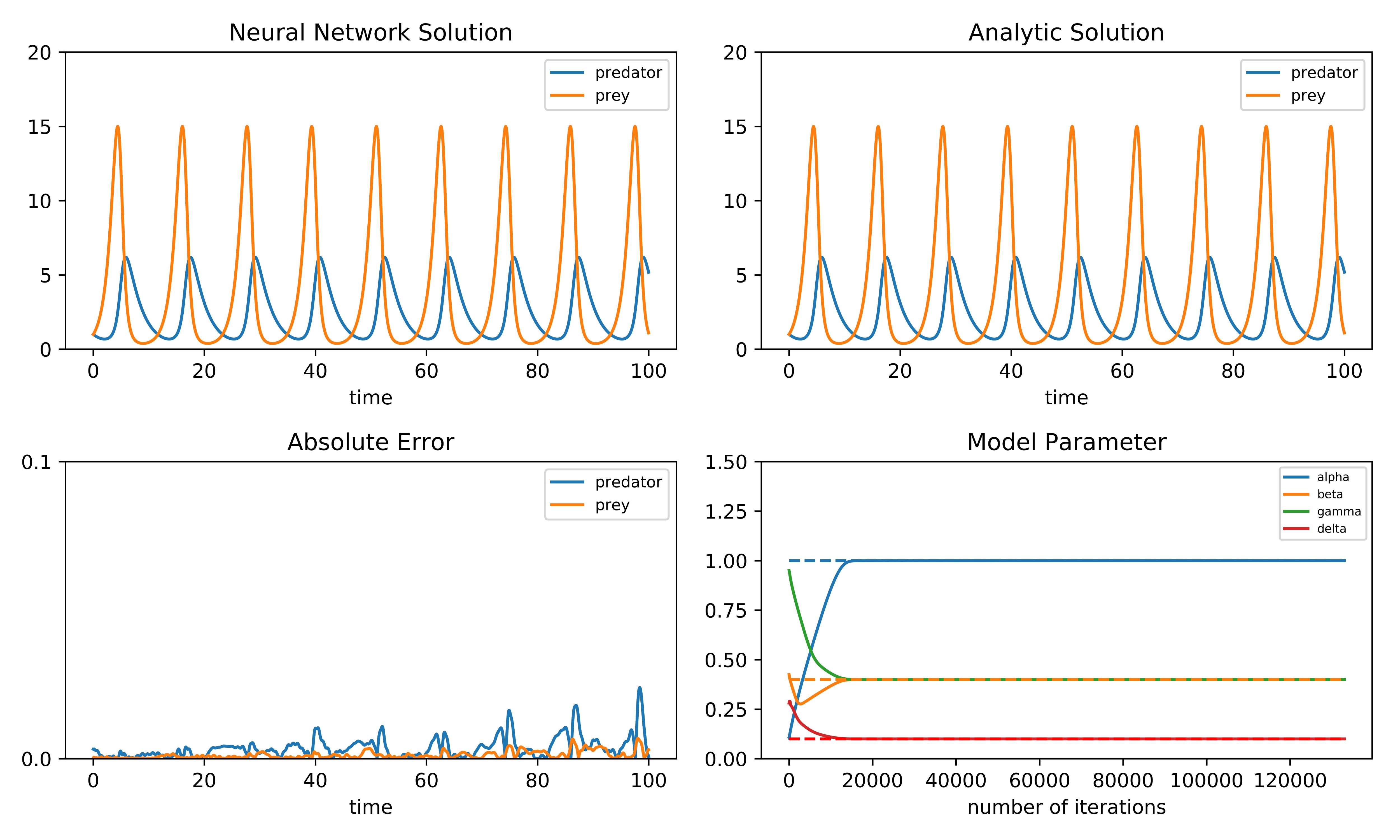}
    \caption{Experimental result for Lotka-Volterra equation}
    \label{fig:lotka_volterra}
\end{figure}

\subsection{Stability Condition}

In this section, we address the Courant$\mbox{-}$Friedrichs$\mbox{-}$Lewy (CFL) condition \cite{courant1967partial} which is a necessary condition while solving certain partial differential equations numerically. We compare the results of transport equation with three different Courant numbers which all violate the convergence condition. As we can see in figure \ref{fig:cfl}, our method shows the convergence well regradless of CFL condition.

\begin{figure}[H]
    \centering
    \includegraphics[ height=0.2\textwidth, width=\textwidth]{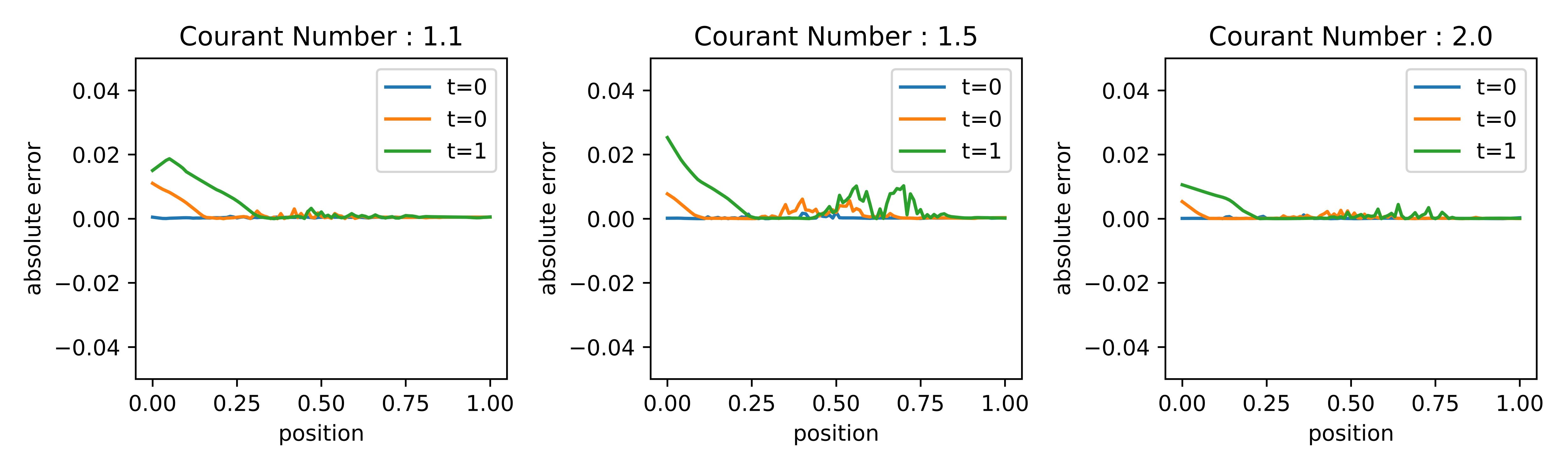}
    \caption{Experimental result for 2D wave equation}
    \label{fig:cfl}
\end{figure}

Note that our experiments in section 4.2 give nice results while the settings violate the well known stability condition called the Von Neumann stability condition.

\section{Conclusion} 
\label{sec5}
First we summarize our theoretical results. For linear differential equations, we have shown that the DNN solution can reduce the proposed loss function as much as we want. A key point in the proof is a continuation of the Theorem 2.1 in \cite{li1996simultaneous} which states the fact that a linear combination of the dilated activations can approximate the target function and the derivative of such a linear combination can approximate its derivative in $L_\infty$ sense. Next we have proved that the DNN which minimizes the loss converges to an analytic solution for linear parabolic or hyperbolic equations. In this step, we have applied basic energy estimates for each equation. Theoretical results for the inverse problem is also included as a continuation of the forward problem. We provide numerical experiments which show that our method indeed works accurately. We emphasize that our method can easily be implemented without any background knowledge about numerical analysis (for example, stability conditions) but about some libraries for implementing neural networks. Although it performs well for fundamental DEs, it might be hard to apply it to more complex equations. We have recognized that the error between a NN solution and an analytic/numerical solution is slightly increasing depending on time.  

For future directions, we may consider two problems. First, we can use more complicated neural network architectures such as CNN, RNN. Since we have dealt with time dependent PDEs, the combination of CNN and RNN would be a great choice for modelling. Second, the theoretical results for non-linear PDEs should be explored. The convergence results of our work are only applicable to linear PDEs. However, as in the experiment for the Lotka-Volterra system, our method is successful in approximating solutions even for non-linear systems. We hope proper convergence results for non-linear systems to be explored.

\section{Acknowledgement} 
This work was supported by the Basic Science Research Program through the National Research Foundation of Korea (NRF-2017R1E1A1A03070105, NRF-2019R1A5A1028324)

\section*{References}
\bibliography{bibfile}

\begin{thebibliography}{10}

\bibitem{arloff2018parameter}
William Arloff, Karl~RB Schmitt, and Luke~J Venstrom.
\newblock A parameter estimation method for stiff ordinary differential
  equations using particle swarm optimisation.
\newblock {\em International Journal of Computing Science and Mathematics},
  9(5):419--432, 2018.

\bibitem{baydin2018automatic}
Atilim~Gunes Baydin, Barak~A Pearlmutter, Alexey~Andreyevich Radul, and
  Jeffrey~Mark Siskind.
\newblock Automatic differentiation in machine learning: a survey.
\newblock {\em Journal of machine learning research}, 18(153), 2018.

\bibitem{berg2017neural}
Jens Berg and Kaj Nystr{\"o}m.
\newblock Neural network augmented inverse problems for pdes.
\newblock {\em arXiv preprint arXiv:1712.09685}, 2017.

\bibitem{berg2018unified}
Jens Berg and Kaj Nystr{\"o}m.
\newblock A unified deep artificial neural network approach to partial
  differential equations in complex geometries.
\newblock {\em Neurocomputing}, 317:28--41, 2018.

\bibitem{chavent2010nonlinear}
Guy Chavent.
\newblock {\em Nonlinear least squares for inverse problems: theoretical
  foundations and step-by-step guide for applications}.
\newblock Springer Science \& Business Media, 2010.

\bibitem{cotter1990stone}
Neil~E Cotter.
\newblock The stone-weierstrass theorem and its application to neural networks.
\newblock {\em IEEE Transactions on Neural Networks}, 1(4):290--295, 1990.

\bibitem{courant1967partial}
Richard Courant, Kurt Friedrichs, and Hans Lewy.
\newblock On the partial difference equations of mathematical physics.
\newblock {\em IBM journal of Research and Development}, 11(2):215--234, 1967.

\bibitem{cybenko1989approximation}
George Cybenko.
\newblock Approximation by superpositions of a sigmoidal function.
\newblock {\em Mathematics of control, signals and systems}, 2(4):303--314,
  1989.

\bibitem{evans10}
Lawrence~C. Evans.
\newblock {\em Partial differential equations}.
\newblock American Mathematical Society, Providence, R.I., 2010.

\bibitem{fasshauer1996solving}
Gregory~E Fasshauer.
\newblock Solving partial differential equations by collocation with radial
  basis functions.
\newblock In {\em Proceedings of Chamonix}, volume 1997, pages 1--8. Vanderbilt
  University Press Nashville, TN, 1996.

\bibitem{hornik1989multilayer}
Kurt Hornik, Maxwell Stinchcombe, and Halbert White.
\newblock Multilayer feedforward networks are universal approximators.
\newblock {\em Neural networks}, 2(5):359--366, 1989.

\bibitem{jianyu2003numerical}
Li~Jianyu, Luo Siwei, Qi~Yingjian, and Huang Yaping.
\newblock Numerical solution of elliptic partial differential equation using
  radial basis function neural networks.
\newblock {\em Neural Networks}, 16(5-6):729--734, 2003.

\bibitem{kingma2014adam}
Diederik~P Kingma and Jimmy Ba.
\newblock Adam: A method for stochastic optimization.
\newblock {\em arXiv preprint arXiv:1412.6980}, 2014.

\bibitem{lagaris1998artificial}
Isaac~E Lagaris, Aristidis Likas, and Dimitrios~I Fotiadis.
\newblock Artificial neural networks for solving ordinary and partial
  differential equations.
\newblock {\em IEEE transactions on neural networks}, 9(5):987--1000, 1998.

\bibitem{lagaris2000neural}
Isaac~E Lagaris, Aristidis~C Likas, and Dimitris~G Papageorgiou.
\newblock Neural-network methods for boundary value problems with irregular
  boundaries.
\newblock {\em IEEE Transactions on Neural Networks}, 11(5):1041--1049, 2000.

\bibitem{levenberg1944method}
Kenneth Levenberg.
\newblock A method for the solution of certain non-linear problems in least
  squares.
\newblock {\em Quarterly of applied mathematics}, 2(2):164--168, 1944.

\bibitem{li2018particle}
Junhong Li and Xiao Li.
\newblock Particle swarm optimization iterative identification algorithm and
  gradient iterative identification algorithm for wiener systems with colored
  noise.
\newblock {\em Complexity}, 2018, 2018.

\bibitem{li1996simultaneous}
Xin Li.
\newblock Simultaneous approximations of multivariate functions and their
  derivatives by neural networks with one hidden layer.
\newblock {\em Neurocomputing}, 12(4):327--343, 1996.

\bibitem{marquardt1963algorithm}
Donald~W Marquardt.
\newblock An algorithm for least-squares estimation of nonlinear parameters.
\newblock {\em Journal of the society for Industrial and Applied Mathematics},
  11(2):431--441, 1963.

\bibitem{mcculloch1943logical}
Warren~S McCulloch and Walter Pitts.
\newblock A logical calculus of the ideas immanent in nervous activity.
\newblock {\em The bulletin of mathematical biophysics}, 5(4):115--133, 1943.

\bibitem{paszke2017automatic}
Adam Paszke, Sam Gross, Soumith Chintala, Gregory Chanan, Edward Yang, Zachary
  DeVito, Zeming Lin, Alban Desmaison, Luca Antiga, and Adam Lerer.
\newblock Automatic differentiation in pytorch.
\newblock 2017.

\bibitem{raissi2019physics}
Maziar Raissi, Paris Perdikaris, and George~E Karniadakis.
\newblock Physics-informed neural networks: A deep learning framework for
  solving forward and inverse problems involving nonlinear partial differential
  equations.
\newblock {\em Journal of Computational Physics}, 378:686--707, 2019.

\bibitem{reddi2019convergence}
Sashank~J Reddi, Satyen Kale, and Sanjiv Kumar.
\newblock On the convergence of adam and beyond.
\newblock {\em arXiv preprint arXiv:1904.09237}, 2019.

\bibitem{sarra2005adaptive}
Scott~A Sarra.
\newblock Adaptive radial basis function methods for time dependent partial
  differential equations.
\newblock {\em Applied Numerical Mathematics}, 54(1):79--94, 2005.

\bibitem{tsilifis2016computationally}
Panagiotis Tsilifis, Ilias Bilionis, Ioannis Katsounaros, and Nicholas Zabaras.
\newblock Computationally efficient variational approximations for bayesian
  inverse problems.
\newblock {\em Journal of Verification, Validation and Uncertainty
  Quantification}, 1(3):031004, 2016.

\bibitem{yaman2013survey}
Fatih Yaman, Valery~G Yakhno, and Roland Potthast.
\newblock A survey on inverse problems for applied sciences.
\newblock {\em Mathematical problems in engineering}, 2013, 2013.

\end{thebibliography}

\end{document}